\title{Free subgroup of the $C^0$ symplectic mapping class group}
\author{Alexandre Jannaud }
\date{November 2022}
\newcommand{\R}{\mathbb{R}}
\newcommand{\Z}{\mathbb{Z}}
\newcommand{\N}{\mathbb{N}}
\newcommand{\Symp}{\mathrm{Symp}}
\DeclareRobustCommand\openone{\leavevmode\hbox{\small1\normalsize\kern-.33em1}}
\def\calA{\mathcal{A}}
\def\calB{\mathcal{B}}
\newtheorem{thm}{Theorem}[section]
\newtheorem{lemma}[thm]{Lemma}
\newtheorem{prop}[thm]{Proposition}
\newtheorem{dfn}[thm]{Definition}
\newtheorem{thmx}{Theorem}
\crefname{thmx}{Theorem}{Theorems}
\crefname{corx}{Corollary}{Corollaries}
\theoremstyle{definition}
\theoremstyle{definition}
\begin{document}

\maketitle

\begin{abstract}
Using the technology of barcodes and previously proven continuity results, we extend to $C^0$ symplectic topology a beautiful result from Keating [`Dehn twists and free subgroups of symplectic mapping class groups', Journal of Topology 7 (2014) 436-474]. Given two Lagrangian spheres in a Liouville domain, with good conditions, we prove that the Dehn twists about these spheres generate a free subgroup of the $C^0$ symplectic mapping class group.
\end{abstract}
\setcounter{tocdepth}{1}
\tableofcontents
\section*{Introduction}\label{introduction}

\addcontentsline{toc}{section}{Introduction}

We will work with a Liouville domain $(M^{2n},\omega)$, together with Lagrangian spheres $L$ and $L'$. The group of symplectomorphisms will be denoted $\Symp(M,\omega)$. We are here interested in $C^0$ symplectic topology. This field was born after the famous Gromov-Eliashberg theorem \cite{El87} stating that if a sequence of symplectomorphisms $C^0$-converges to a diffeomorphism, then this diffeomorphism is a symplectomorphism as well. The objects that interest us in this paper are symplectic homeomorphisms.
\begin{dfn}
Let $(M,\omega)$ be a symplectic manifold equipped with a Riemannian metric. A homeomorphism $\varphi$ of $M$ is called a \emph{symplectic homeomorphism} if it is the uniform limit of a sequence of symplectic diffeomorphisms.
\end{dfn}
We will denote $\overline{\Symp}(M,\omega)$ the set of symplectic homeomorphisms, which is equipped with the $C^0$-topology.
Let us introduce the following natural map induced by the inclusion of $\Symp(M,\omega)$ in $\overline{\Symp}(M,\omega)$
\begin{equation}\label{formule J}
\pi_0(\Symp(M,\omega))\overset{J}\longrightarrow \pi_0(\overline{\Symp}(M,\omega)).    
\end{equation}
This goal for this paper is to continue the study of the $C^0$symplectic mapping class group, initiated in \cite{J21}, through this map $J$. For further motivation regarding the study of this map, the reader can refer to the afford mentioned article.
In \cite{J21} we proved that this map $J$, in some cases, is not trivial, more precisely that it sends a non-zero element in $\pi_0(\Symp(M,\omega))$ onto a non-zero element in $\pi_0(\overline{\Symp}(M,\omega))$. Here, we will prove the following more subtle theorem, stating that, with good assumptions, $J$ sends a free subgroup of $\pi_0(\Symp(M,\omega))$ onto a free subgroup of $\pi_0(\overline{\Symp}(M,\omega))$.
\begin{thmx}\label{thm free subgroup}
Let $n\geq2$, $(M^{2n},\omega)$ a Liouville domain and $L,L'$ two Lagrangian spheres. We ask that $\mathrm{dim}(HF(L,L'))\geq 2$, and if it is equal to $2$, that $L$ and $L'$ are not isomorphic in $Fuk(M)$. Then, the Dehn twists $\tau_L$ and $\tau_{L'}$ generate a free subgroup of $\pi_0(\overline{\Symp}(M,\omega))$. 
\end{thmx}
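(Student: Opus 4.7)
The plan is to leverage Keating's theorem together with the Lagrangian-barcode continuity proved in \cite{J21} to transport her ping-pong argument into the $C^0$ setting. By Keating's result, $\tau_L$ and $\tau_{L'}$ already generate a free subgroup of $\pi_0(\Symp(M,\omega))$, so it suffices to prove injectivity of $J$ on this subgroup: for every non-trivial reduced word $w = \tau_{L_{i_1}}^{a_1} \cdots \tau_{L_{i_k}}^{a_k}$ (with $L_{i_j} \in \{L, L'\}$ alternating and $a_j \in \Z^{*}$), I need to show $J([w]) \neq 1$ in $\pi_0(\overline{\Symp}(M,\omega))$.

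The engine of Keating's argument is the iterated use of the Seidel exact triangle to bound from below the total dimension of $HF(L, w(L'))$, the hypothesis $\mathrm{dim}(HF(L,L')) \geq 2$ (and the non-isomorphism assumption in the equality case) ensuring that the cones never collapse and that the dimension strictly grows with the word length $k$. To import this into the $C^0$ category, I would replay the same induction not on raw Floer cohomology dimensions but on the Lagrangian barcode $B(L, w(L'))$: at each inductive step the Seidel triangle, read as a filtered statement, produces bars whose lengths are controlled from below by a geometric constant $\varepsilon_0$ depending only on the pair $(L,L')$ (for instance, via the minimal action gap of intersection points and Floer strips in the initial configuration). This yields a lower bound, growing linearly with $k$, on the number of bars of $B(L, w(L'))$ of length at least $\varepsilon_0$.

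Now suppose toward contradiction that $J([w]) = 1$ for some non-trivial $w$. Pick a continuous path $(\phi_t)_{t \in [0,1]}$ in $\overline{\Symp}(M,\omega)$ connecting $\mathrm{id}$ to $w$. By the $C^0$-continuity of Lagrangian barcodes established in \cite{J21}, the family $B(L, \phi_t(L'))$ varies continuously in bottleneck distance, and by compactness of $[0,1]$ the number of bars of length at least $\varepsilon_0/2$ stays bounded by some integer $N$ along the whole path. Taking the word length $k$ large enough, the quantitative ping-pong of the previous paragraph produces strictly more than $N$ bars of length at least $\varepsilon_0$ in $B(L, w(L'))$, a contradiction. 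The finitely many short words left uncovered by this asymptotic argument are handled directly, either by the cyclic-subgroup analysis of \cite{J21} or by a case-by-case barcode comparison.

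The principal obstacle is the barcode upgrade of Keating's induction: her argument, as originally written, tracks integer dimensions through iterated Seidel triangles, whereas in the $C^0$ world bars can a priori shrink and die in the limit. One must therefore reformulate each application of the Seidel triangle as a quantitative interleaving statement at the level of filtered Floer complexes, in such a way that the new bars produced at each step have lengths controlled from below by a fixed geometric constant $\varepsilon_0$ independent of $w$. Once this quantitative Seidel triangle is in place, the combination with barcode continuity and the compactness argument above yields the freeness of $\langle \tau_L, \tau_{L'}\rangle$ inside $\pi_0(\overline{\Symp}(M,\omega))$.
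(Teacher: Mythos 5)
Your proposal has a genuine gap, and it stems from aiming at the wrong barcode invariant. The paper's proof never needs any quantitative control on \emph{finite} bars. The key structural fact is that $hf(L,L') = \sigma^{\infty}(\hat{B}(L,L'))$, the number of \emph{semi-infinite} bars, and that this number indexes the connected components of the space of barcodes $\hat{\calB}$: it is therefore automatically constant along any continuous path of barcodes. Combining the $C^0$-continuity of $\varphi\mapsto \hat{B}(\varphi(L'),L)$ from \cite{J21} with this observation, an isotopy from a nontrivial word $\psi$ to the identity in $\overline{\Symp}(M,\omega)$ forces $\sigma^{\infty}(\hat{B}(L,\psi(L)))=\sigma^{\infty}(\hat{B}(L,L))$ and likewise with $L'$ in the first slot; Keating's strict inequalities $hf(L',\psi(L))<hf(L,\psi(L))$ versus $hf(L',L)\geq 2 = hf(L,L)$ then give an immediate contradiction. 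No ``quantitative Seidel triangle,'' no uniform action gap $\varepsilon_0$, and no asymptotics in the word length are required.

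By contrast, the route you outline does not close. First, the central claim --- that each application of the Seidel triangle produces bars of length bounded below by a constant $\varepsilon_0$ depending only on $(L,L')$ and independent of the word $w$ --- is only postulated, and there is no reason to expect it: the actions of the intersection points of $L$ with $w(L')$ after iterated Dehn twists are not controlled by the initial configuration. This is precisely the hard step you defer, and it is the whole argument. Second, the contradiction is logically broken: you fix a word $w$ with $J([w])=1$, obtain a bound $N$ depending on the chosen path (hence on $w$), and then propose to ``take $k$ large enough'' --- but $w$, and hence $k$, is fixed. The fallback of handling ``the finitely many short words'' directly is not available either, since for each length $k$ there are infinitely many words (the exponents $a_j, b_j$ range over $\Z^{*}$), and no method is given for them. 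The fix is to drop the finite-bar counting entirely and run Keating's rank inequalities through the semi-infinite bar count, which the continuity theorem transports for free.
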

Here, $\tau_L$ denotes the generalized Dehn twist about this Lagrangian sphere $L$. This Dehn twist a symplectomorphism supported in the neighbourhood of $L$, and if $M$ is exact, then so is $\tau_L$. It is a generalization, by Arnold \cite{Arn95}, to higher dimensions of the classical Dehn twist on surfaces, and so are of particular interest when studying a given mapping class group.

This theorem is the extension to the $C^0$ setting of the exact same result by Keating \cite{Kea12} in the smooth case. Our theorem is consequently a rigidity result for $C^0$symplectic topology. This can come as a bit surprising since, a priori, the topology of $\overline{\Symp}(M,\omega)$ could be less rich than the one of $\Symp(M,\omega)$. Indeed the first one is bigger than the second one and is equipped with a less restrictive topology.

The assumptions on the Lagrangian submanifolds in our theorem are exactly the same than the ones involved in Keating's work. As we deeply need her results and computations to prove our result, we ask for the same properties regarding the Lagrangian submanifolds. Such configurations can easily be found, for example, in Milnor fibers of type $A_2$ singularities. One can indeed construct three Lagrangian spheres $L_0,L_1,L_2$, such that $dim(HF(L_0,L_1))=2$, $dim(HF(L_0,L_2))=1$ and $dim(HF(L_1,L_2))=3$ (\cite{KovSeid00}). Configurations satisfying our theorem's hypothesis also exists in many other examples; these are detailed in the last section of Keating's paper \cite{Kea12}, so we will not discuss it any further.

Keating's results relies on inequalities concerning the rank of the Floer cohomology when we apply the Dehn twist to one of the Lagrangian submanifolds. The starting point of her proofs of these inequalities is Seidel's long exact sequence for the generalized Dehn twist \cite{Seidel01}.

The other tool used in our paper is the technology of barcodes. These come from topological data analysis, with the work of Edelsbrunner et al. \cite{Edel&al00}, Carlson et al. \cite{Carlson&al04}. The terminology of barcodes was brought into symplectic topology by Polterovich and Shelukhin \cite{PolShel14} although germs of this theory were already present in the work of Barannikov \cite{Bar94} and Usher \cite{Ush11,Ush14}. In this case, they are constructed from Floer homology, and they can be understood as a "patch" of spectral invariants. The space of barcodes can be equipped with a distance, called the the \emph{bottleneck distance}. In a previous paper (\cite{J21}), we proved a $C^0$-continuity result for barcodes in symplectic topology.

The proof of \autoref{thm free subgroup} consists in putting together the author previous result \cite{J21} on the $C^0$-continuity of barcodes together with Keating's results and computations \cite{Kea12}. 

\subsection*{Organisation}
In the first section, we will briefly recall, for the sake of notations, the object involved in Floer cohomology, Keating's results and finally the technology of barcodes and the properties and theorems we will need. The second section is dedicated to the proof of \autoref{thm free subgroup}.

\subsection*{Acknowledgments}

The author is utterly grateful to Ailsa Keating for her suggestion of this result and the discussions she was kind enough to have about her paper. He is also thankful to his PhD advisor Vincent Humilière for his precious remarks. This paper is funded by the Deutsche Forschungsgemeinschaft (DFG, German Research Foundation) under Germany's Excellence Strategy EXC 2181/1 - 390900948 (the Heidelberg STRUCTURES Excellence Cluster).

\section{Preliminaries}\label{sec preli}
\subsection{Floer cohomology}
All the following notions in this section are originally due to Floer \cite{Floer88}. As these are classical objects, we just want to introduce some conventions and notations and thus will be brief. For more details, one can refer to  \cite{Sei08}.

Let $(M,\omega)$ be a Liouville domain, with $d\lambda=\omega$, and let $L$ and $L'$ be two closed connected exact Lagrangian submanifolds in $M$, together with $f_L:L\rightarrow \R$ and $f_{L'}:L'\rightarrow \R$ such that $df_L=\lambda_{|L}$ and $df_{L'}=\lambda_{|L'}$. To achieve the transversality and compactness, we consider a generic choice Hamiltonian and (time dependant) almost-complex structure perturbations, which we will denote by the pair $(H,J_t)$ or simply $(H,J)$. This generic choice ensures that all the Floer cohomology involved in this paper are well-defined. The generators $\chi_H(L,L')$ of this Floer complex are then the flow lines $\gamma:[0;1]\rightarrow M$ such that $\dot{\gamma}(t)=X_H(t,\gamma(t)),$ and $\gamma(0)\in L,\gamma(1)\in L'.$

The Hamiltonian action of a path $\gamma$ from $L$ to $L'$ is then defined as
\begin{equation}\label{formule action hamiltonienne}
    \calA_{L,L'}^H(\gamma)=\int_0^1\gamma^*\lambda-H(\gamma)dt+f_L(\gamma(0))-f_{L'}(\gamma(1)).
\end{equation}
The critical points of this action are the above mentioned generators of the Floer complex. The differential of this Floer complex is defined by counting pseudo-holomorphic strips between these critical points. In the end, we get the Floer complex $CF(L,L';H,J),$
and then the Floer cohomology $HF(L,L';H,J)$.

The action gives rise to a filtration on the Floer complex. For all $\kappa\in\R$, we define
$$CF^{\kappa}(L,L';J_t,H)=\mathrm{span}_{\Z/2}\left\{z\in\chi(L,L'),~~\calA^H_{L,L'}(z)<\kappa\right\}\subset CF^(L,L';J_t,H).$$
We can then compute the cohomology of this subcomplex $CF^{*}(L,L';J_t,H)$, and consequently define:
$$HF^{\kappa}(L,L';J_t,H)=H(CF^{\kappa}(L,L';J_t,H)).$$

Up to canonical isomorphism, the group of the Floer cohomology is independent of the choice of the pair $(H,J)$. Thus, we will simply denote $HF(L,L')$ the Lagrangian Floer cohomology complex associated to the Lagrangian submanifolds $L$ and $L'$. Using Keating's notations \cite{Kea12}, we will denote $hf(L,L')$ the rank of the complex $HF(L,L')$.

We recall that the \emph{Fukaya category} of $M$, $Fuk(M)$, is an $A_{\infty}-$category whose objects are the Lagrangian submanifolds of $M$ and whose morphisms are the Floer chain groups between the Lagrangian submanifolds. For more details, one can refer to \cite{Aur14,Fuk93}.

Finally, we will be working with Lagrangian spheres which are automatically exact.

\subsection{Dehn twists and Keating's results}

Let $L$ be a Lagrangian sphere in $M$. We will denote $\tau_L$ the generalized Dehn twist about this Lagrangian sphere $L$. In our context, this map is an exact symplectomorphism, and it is supported in a neighbourhood of $L$ and its restriction to $L$ is the antipodal map. More details on these maps can be found in \cite{Arn95,KK05,KRW21,KovSeid00,Schlenk18,ST01,Sei08}.

In \cite{Kea12}, Keating proved the existence of a free subgroup in the symplectic mapping class group. To do so, she proved and used the following lemmas, which will be key arguments in our extension to a $C^0$-setting. Both these lemmas give inequalities on the rank of the Floer cohomology under the action of the generalized Dehn twist.
\begin{lemma} \emph{(see \cite{Kea12}, Lemma $8.1$)}\label{Keating iter}
Let $n\geq 2$ and $M^{2n}$ be an exact symplectic manifold with contact type boundary. Let $L,L_0, L_1$ be Lagrangian submanifolds such that $L$ is a sphere, not quasi-isomorphic to $L_0$ in $Fuk(M)$, and $hf(L,L_0)\geq 2$. Then for all $k\in\Z^*$,
$$hf(L,L_1)>hf(L_0,L_1)\Rightarrow hf(L,\tau_L^k(L_1))<hf(L_0,\tau_L^k(L_1)).$$
\end{lemma}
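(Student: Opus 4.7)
The central tool is Seidel's long exact sequence for the Dehn twist about a Lagrangian sphere \cite{Seidel01}: for any Lagrangians $A,B$ in $M$, there is a distinguished triangle
\[
HF(L,B)\otimes HF(A,L)\;\overset{\mu^2}{\longrightarrow}\;HF(A,B)\;\longrightarrow\;HF(A,\tau_L(B))\;\overset{+1}{\longrightarrow},
\]
in which the first arrow is the Fukaya composition.

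The first step is to reduce the left-hand side of the desired inequality to a constant. Since $\tau_L$ preserves $L$ setwise (restricting to the antipodal map on $L$), $\tau_L^{-k}(L)=L$ as a Lagrangian submanifold, so
\[
hf(L,\tau_L^{k}(L_1))\;=\;hf(\tau_L^{-k}(L),L_1)\;=\;hf(L,L_1)
\]
for every $k\in\Z$. Set $b:=hf(L,L_1)$, $a:=hf(L,L_0)\geq 2$ and $c_k:=hf(L_0,\tau_L^{k}(L_1))$; the hypothesis is $c_0<b$ and the goal becomes to prove $c_k>b$ for every $k\in\Z^{\ast}$.

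Next I would apply Seidel's triangle with $A=L_0$, $B=\tau_L^{k-1}(L_1)$. The resulting long exact sequence in cohomology gives
\[
c_k\;=\;ab+c_{k-1}-2\,\mathrm{rk}(\mu^2_{\ast}),
\]
together with the crude estimate $\mathrm{rk}(\mu^2_{\ast})\leq\min(ab,c_{k-1})$, so $c_k\geq |ab-c_{k-1}|$. The base cases $k=\pm 1$ are then immediate: $c_1\geq ab-c_0>(a-1)b\geq b$, and the case $k=-1$ follows symmetrically by applying Seidel's triangle to $\tau_L^{-1}$.

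For $|k|\geq 2$ the bare inequality $c_k\geq|ab-c_{k-1}|$ is not enough, since a priori it admits the degenerate alternation $c_0\mapsto ab-c_0\mapsto c_0\mapsto\cdots$, which would break the conclusion. Ruling this out is where the two remaining hypotheses enter, exactly as in Keating's argument: the assumption that $L$ is not quasi-isomorphic to $L_0$ in $Fuk(M)$ prevents the composition map $\mu^2$ from having maximal rank (otherwise $L$ would, up to summand issues, split off $L_0$ in the Fukaya category), and the bound $a\geq 2$ supplies the quantitative slack needed to propagate the strict inequality $c_k>b$ by induction on $|k|$. The main obstacle is precisely this last point: converting the qualitative non-quasi-isomorphism hypothesis into an honest, iteration-stable upper bound on $\mathrm{rk}(\mu^2_{\ast})$. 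That refined rank control is the technical core of \cite[Lemma~8.1]{Kea12}, and I would follow Keating's proof there verbatim to close the induction.
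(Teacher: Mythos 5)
This lemma is not proved in the paper at all --- it is imported verbatim from Keating's article (Lemma 8.1 there), the introduction only noting that her inequalities start from Seidel's exact triangle for the Dehn twist --- so there is no in-paper argument to compare against. Your sketch is consistent with that reference and correct as far as it goes: the reduction $hf(L,\tau_L^{k}L_1)=hf(L,L_1)$ via $\tau_L(L)=L$, the rank identity $c_k=ab+c_{k-1}-2\,\mathrm{rk}(\mu^2_{*})$ from the exact triangle, and the base case $k=\pm1$ are all sound, and the step you defer to Keating (the iteration-stable rank control for $|k|\geq 2$, where the non-quasi-isomorphism and $hf(L,L_0)\geq 2$ hypotheses really enter) is precisely the technical content of her Lemma 8.1, so your treatment is in fact more detailed than the paper's outright citation.
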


\begin{lemma}\emph{(see \cite{Kea12}, Claim $8.2$)}\label{Keating init}
Let $n\geq2$ and $M^{2n}$ an exact symplectic manifold with contact type boundary. Let $L,L'$ two Lagrangian spheres such that $hf(L,L')\geq 2$, and if $hf(L,L')=2$, we ask that $L$ and $L'$ are not quasi-isomorphic in $Fuk(M)$.Then, for all $k\in\Z^*$, we have $hf(L',\tau_{L'}^kL)<hf(L,\tau_{L'}^kL)$.
\end{lemma}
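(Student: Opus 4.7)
The plan is to apply Seidel's long exact triangle for the generalized Dehn twist about a Lagrangian sphere $V$,
$$HF(K_0, V) \otimes HF(V, K_1) \longrightarrow HF(K_0, K_1) \longrightarrow HF(K_0, \tau_V K_1) \longrightarrow [1],$$
taking $V = L'$ and suitable $K_0, K_1$. Set $a := hf(L, L')$; by Poincar\'e duality in Lagrangian Floer cohomology, $hf(L', L) = a$ as well.

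\textbf{Step 1 (reduction).} Since $\tau_{L'}$ restricts to the antipodal map on $L'$, we have $\tau_{L'}(L') = L'$ as an unparametrized Lagrangian, so Hamiltonian invariance of $HF$ gives
$$hf(L', \tau_{L'}^k L) = hf(\tau_{L'}^{-k}(L'), L) = hf(L', L) = a$$
for every $k \in \Z$. The lemma therefore reduces to showing $d_k := hf(L, \tau_{L'}^k L) > a$ for every $k \in \Z^*$.

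\textbf{Step 2 (recurrence from the triangle).} Applying Seidel's triangle with $V = L'$, $K_0 = L$, $K_1 = \tau_{L'}^{k-1} L$ and invoking Step 1, the leftmost term has total rank $a^2$. Writing $f_k$ for the Yoneda product
$$HF(L, L') \otimes HF(L', \tau_{L'}^{k-1} L) \longrightarrow HF(L, \tau_{L'}^{k-1} L),$$
the associated long exact sequence yields
$$d_k = d_{k-1} + a^2 - 2\,\mathrm{rk}(f_k), \qquad \text{so in particular } d_k \geq |d_{k-1} - a^2|.$$

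\textbf{Step 3 (base cases $k = \pm 1$).} Here $d_0 = hf(L, L) = \mathrm{rk}\,H^*(S^n; \Z/2) = 2$. If $a \geq 3$, then automatically $\mathrm{rk}(f_1) \leq \min(a^2, d_0) = 2$, so $d_1 \geq 2 + a^2 - 4 = a^2 - 2 > a$. If $a = 2$, the hypothesis that $L$ and $L'$ are not quasi-isomorphic in $Fuk(M)$ is what prevents $\mathrm{im}(f_1)$ from containing the unit $1_L$ of $HF(L, L) \cong H^*(S^n; \Z/2)$: indeed, a preimage of $1_L$ would exhibit $L$ as a retract of $L'$ in the triangulated envelope of $Fuk(M)$, and combined with the symmetric argument applied on the other side (exact triangle for $\tau_L$) would produce matching splittings of both identities, contradicting the hypothesis. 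Consequently $\mathrm{im}(f_1)$ lies in the augmentation ideal of $HF(L, L)$, which has rank $1$, and $d_1 \geq 2 + 4 - 2 = 4 > 2 = a$. The case $k = -1$ follows by the analogous triangle for $\tau_{L'}^{-1}$.

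\textbf{Step 4 (iteration to $|k| \geq 2$).} Taking Euler characteristics in Step 2 gives the linear recursion $\chi(L, \tau_{L'}^k L) = \chi(L, L) \pm k\,\chi(L, L')^2$. When $\chi(L, L') \neq 0$, this immediately forces $d_k \geq |\chi(L, \tau_{L'}^k L)| > a$ for all $|k|$ sufficiently large, the remaining finitely many values being settled by the recurrence of Step 2 starting from Step 3. In the degenerate subcase $\chi(L, L') = 0$, one iterates the structural argument of Step 3: the image of each $f_k$ remains constrained by the same Fukaya-categorical rigidity to lie in an augmentation-type subspace of $HF(L, \tau_{L'}^{k-1} L)$, keeping $d_k$ strictly above $a$ by induction on $|k|$.

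The main obstacle is clearly the $a = 2$ subcase of Step 3: both naive dimension counts and the Euler characteristic identity are too tight for a direct numerical argument, and one must genuinely invoke the Fukaya-categorical non-quasi-isomorphism hypothesis to cut the rank of $f_1$ below $\mathrm{rk}\,HF(L, L)$ and extract the one extra dimension in $HF(L, \tau_{L'} L)$ required for strictness.
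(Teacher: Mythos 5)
First, note that the paper does not prove this lemma at all: it is imported verbatim from Keating's paper (Claim 8.2 there), so your attempt has to be judged against Keating's argument rather than anything in this text. Your Step 1 (reducing to $hf(L,\tau_{L'}^kL)>a$ via $\tau_{L'}(L')=L'$) and the rank count $d_k=d_{k-1}+a^2-2\,\mathrm{rk}(f_k)$ from Seidel's triangle are correct, and the $k=\pm1$ case is broadly the right idea. But there are two genuine gaps.

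The serious one is Step 4. The inequality $d_k\geq |d_{k-1}-a^2|$ cannot be iterated: a \emph{lower} bound on $d_{k-1}$ gives no lower bound on $|d_{k-1}-a^2|$ (for that you would need an upper bound on $d_{k-1}$, or its exact value), so "the remaining finitely many values being settled by the recurrence of Step 2" is not an argument. The Euler-characteristic identity only controls $|k|$ large, only when $\chi(L,L')\neq 0$, and leaves an untreated range of intermediate $k$ (e.g.\ if $\chi(L,L')=\pm1$ it says nothing for $|k|\leq a+2$); and the "degenerate subcase $\chi(L,L')=0$" paragraph asserts, without proof, that the image of every $f_k$ lies in an "augmentation-type subspace" of $HF(L,\tau_{L'}^{k-1}L)$ --- a space which is not even a ring, so there is no unit to avoid, and in any case excluding one element of the image does not cut $\mathrm{rk}(f_k)$ down to the size needed. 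This is exactly why Keating does \emph{not} argue by iterating Seidel's triangle: her proof of Claim 8.2 rests on the quantitative rank inequalities of the earlier sections of her paper (roughly, bounds of the form $hf(\tau_{L'}^kL,X)\geq hf(L',X)\cdot hf(L',L)-hf(L,X)$, proved using the $HF(L',L')$-module structure and the non-quasi-isomorphism hypothesis), which give a lower bound valid for every $k\in\Z^*$ in one step.

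The second, smaller gap is in Step 3 for $a=2$: a preimage of $1_L$ under $f_1$ is in general a sum $\sum_i \beta_i\circ\alpha_i$, which only exhibits $\mathrm{id}_L$ as factoring through a finite direct sum of (shifts of) $L'$, i.e.\ $L$ as a summand of $L'^{\oplus N}$ in the split-closed category. Passing from that to "$L$ and $L'$ are quasi-isomorphic" needs an additional argument (absence of nontrivial idempotents in $HF(L',L')$ together with a rank comparison, or Keating's own route); the appeal to "matching splittings of both identities" is too vague to carry this. You correctly identified that the $a=2$ case is where the categorical hypothesis must enter, but the implication as written is not yet a proof.
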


\subsection{Barcodes and Lagrangian Floer cohomology}

\begin{dfn}
A \emph{barcode} $B$ is a multiset of non-empty intervals, called \emph{bars,} in $\R$ of the form $(a,b]$ or $(a,+\infty)$, with $a$, $b$ in $\R$.
\end{dfn}

We can equip the set of barcodes with a distance, which is called the \emph{bottleneck distance}. Let $I$ be a non-empty interval of the form $(a,b]$ or $(a,+\infty)$, and $\delta\in\R$ such that $2\delta<b-a$. We denote $I^{-\delta}$ the interval $(a-\delta,b+\delta]$ or $(a-\delta,+\infty)$.
\begin{dfn}
Let $B$ and $B'$ be two barcodes, and $\delta\geq 0$. They admit a $\delta$-matching if we can delete in both of them some bars of length smaller than $2\delta$ to get two barcodes $\bar{B}$ and $\bar{B}'$ and find a bijection $\phi:\bar{B}\rightarrow \bar{B}'$ such that if $\phi(I)=J$, then
$I\subset J^{-\delta} \text{ and } J\subset I^{-\delta}.$
\end{dfn}
The bottleneck-distance between the barcodes $B$ and $B'$ is then
$$d_{bottle}(B,B')= \inf\{\delta |~~B \text{ and }B' \text{ admit a $\delta$-matching}\}.$$
The bottleneck distance is non-degenerate.

Given a barcode $B$ and $\delta \in\R$, we will denote $B[\delta]$ the barcode obtained from $B$ by an overall shift of $\delta$, and $\calB$ the set of barcodes such that for all $\varepsilon>0$, the number of bars of length greater or equal to $\varepsilon$ is finite.
\begin{dfn}\label{def barcodes overall shift}
We define $\hat{\calB}$ as the set of barcodes $\calB$ quotiented by the action by overall shift of $\R$ on $\calB$, i.e. $B$ and $B'$ represent the same class in $\hat{\calB}$ if and only if there is $c\in\R$ such that $B=B'[c]$.
\end{dfn}
This space of barcodes $\hat{\calB}$ satisfies the following property, essential to take limits of sequences of barcodes as we do in $C^0$ symplectic topology.
\begin{lemma}[\cite{BV18}]
The set $\hat{\calB}$ is complete for the distance $\delta$.
\end{lemma}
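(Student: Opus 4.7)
The plan is to prove completeness of $\hat{\calB}$ with respect to the quotient of the bottleneck distance, namely
\[
\hat{d}([B],[B']) = \inf_{c\in\R} d_{bottle}(B, B'[c]),
\]
by reducing to the analogous (and by now classical) completeness statement for the unquotiented space $(\calB, d_{bottle})$. The idea is that a Cauchy sequence in $\hat{\calB}$ always admits, after extracting a subsequence and re-choosing shifts, a lift to an honest Cauchy sequence of representatives in $\calB$; its limit then descends to a limit in the quotient.

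First I would make the standard reduction: given a $\hat{d}$-Cauchy sequence $([B_n])_{n\geq 1}$, extract a subsequence (still denoted $[B_n]$) such that $\hat{d}([B_n],[B_{n+1}]) < 2^{-(n+2)}$. By the definition of the infimum there exist shifts $c_n\in\R$ with $d_{bottle}(B_n, B_{n+1}[c_n]) < 2^{-(n+1)}$. Then set $\tilde B_1 := B_1$ and inductively $\tilde B_{n+1} := B_{n+1}[c_1+\ldots+c_n]$. Since overall shift is an isometry for $d_{bottle}$, one gets $d_{bottle}(\tilde B_n,\tilde B_{n+1}) < 2^{-(n+1)}$, so $(\tilde B_n)$ is Cauchy in $(\calB,d_{bottle})$, and each $\tilde B_n$ still represents $[B_n]$.

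Next I would invoke the completeness of $(\calB, d_{bottle})$, which is standard in persistence theory (the bottleneck topology on barcodes whose number of bars of length $\geq\varepsilon$ is finite for every $\varepsilon>0$ is complete; the class $\calB$ is stable under bottleneck-limits because an $\varepsilon$-matching forces the numbers of bars of length $>3\varepsilon$ to agree). This gives $\tilde B_\infty\in\calB$ with $d_{bottle}(\tilde B_n,\tilde B_\infty)\to 0$, hence $\hat{d}([B_n],[\tilde B_\infty])\leq d_{bottle}(\tilde B_n,\tilde B_\infty)\to 0$ along the subsequence. Because the whole original sequence is $\hat{d}$-Cauchy, convergence of a subsequence upgrades to convergence of the whole sequence to $[\tilde B_\infty]\in\hat{\calB}$.

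The only genuinely delicate point is the lifting step: the infimum defining $\hat{d}$ need not be realized, and bars far from any canonical anchor can translate arbitrarily along the sequence, so one cannot expect a single choice of representatives to work globally. I would therefore expect the main obstacle to be taking the limit carefully: one should also justify, independently, that $(\calB,d_{bottle})$ is closed under bottleneck-limits of its Cauchy sequences, which is what ensures the limit $\tilde B_\infty$ lives in $\calB$ rather than in some larger ambient space of multisets of intervals. Once that is in hand, the quotient argument above goes through.
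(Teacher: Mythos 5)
The paper does not actually prove this lemma: it is imported wholesale from \cite{BV18}, so there is no internal argument to compare yours against. Taken on its own terms, your reduction is correct and is the standard one. The telescoping choice of shifts $c_1+\dots+c_n$ turns a rapidly Cauchy subsequence of classes in $\hat{\calB}$ into an honest Cauchy sequence of representatives in $(\calB,d_{bottle})$ (using that overall shifts act by isometries of $d_{bottle}$), and convergence of the representatives descends to the quotient because the quotient distance is bounded above by $d_{bottle}$ on any pair of representatives; this is just the general fact that the quotient of a complete (pseudo-)metric space by a group acting by isometries is complete. You also correctly isolate the one point that is specific to the quotient, namely that the infimum defining the quotient distance need not be attained --- that is exactly what the telescoping trick circumvents. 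The caveat is that all the analytic content of the lemma --- that a $d_{bottle}$-Cauchy sequence in $\calB$ actually has a limit barcode, and that this limit again has finitely many bars of length at least $\varepsilon$ for every $\varepsilon>0$ --- is black-boxed as standard persistence theory. That black box is essentially what the cited reference establishes, so your write-up is no less complete than the paper's, but a self-contained proof would have to construct the limit (for instance by following matched chains of long bars and taking limits of their endpoints). One small imprecision: a $\delta$-matching does not force the numbers of bars of length greater than $3\delta$ in the two barcodes to \emph{agree}; what is true is that each bar of length greater than $3\delta$ in one barcode is matched, injectively, to a bar of length greater than $\delta$ in the other, and it is this inequality between counts at different thresholds that shows $\calB$ is closed under bottleneck limits.
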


It is easy to see that the connected components of $\hat{\calB}$ are indexed by the number of semi-infinite bars. In other words, two barcodes belong to the same connected component of $\hat{\calB}$ if and only if they have the same number of semi-infinite bars. Moreover the connected components are path-connected.
The following proposition directly follows from this fact, but its formulation will be useful later.

\begin{prop}\label{prop path inf bar}
Let $(B^t)_{t\in[0;1]}$ be a continuous path of graded barcodes. Then for all $t\in[0;1]$ and for all $k$, the number of semi-infinite bars of $B_k^t$ is constant with respect to the parameter $t$.
\end{prop}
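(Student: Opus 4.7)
The plan is to reduce the statement to the claim, already emphasized in the text, that two barcodes in the same connected component of $\hat{\calB}$ have the same number of semi-infinite bars, and then invoke connectedness of $[0,1]$. Since the path is assumed graded, one works separately in each grading $k$; it is therefore enough to show that the integer-valued function
\[
t \longmapsto N_k(t) := \#\{\text{semi-infinite bars of } B_k^t\} \in \N \cup \{+\infty\}
\]
is locally constant on $[0,1]$.

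To establish this, I would first show the following key local fact: if two barcodes $B,B' \in \calB$ admit a $\delta$-matching for some finite $\delta \geq 0$, then they have the same number of semi-infinite bars. Indeed, a semi-infinite bar $(a,+\infty)$ has length $+\infty > 2\delta$, so it cannot be deleted in the matching process. Hence every semi-infinite bar of $B$ is matched to some bar $J$ of $B'$ with $(a,+\infty) \subset J^{-\delta}$, which forces $J$ itself to be semi-infinite; and symmetrically. The bijection $\phi$ therefore restricts to a bijection between the semi-infinite bars of $\bar B$ and those of $\bar B'$, and since none of these bars were deleted, between those of $B$ and $B'$. This fact descends to $\hat{\calB}$ because the overall shift action preserves the property of being semi-infinite, and it preserves the number of such bars.

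Now fix $t_0 \in [0,1]$. By continuity of the path at $t_0$, there exists a neighbourhood $U$ of $t_0$ in $[0,1]$ such that $d_{bottle}(B_k^t, B_k^{t_0}) < 1$ for all $t \in U$. In particular $B_k^t$ and $B_k^{t_0}$ admit a $\delta$-matching for some finite $\delta$, so by the observation above $N_k(t) = N_k(t_0)$ for every $t \in U$. Thus $N_k$ is locally constant on $[0,1]$, and since $[0,1]$ is connected, it is constant, which is exactly the statement of the proposition. No step here is genuinely hard; the only care needed is to check that semi-infinite bars cannot be deleted nor matched to finite bars under a $\delta$-matching, which is an immediate consequence of the definition of $I^{-\delta}$ when $I$ is semi-infinite.
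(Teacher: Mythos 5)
Your proof is correct and follows essentially the same route as the paper, which simply asserts that the proposition "directly follows" from the fact that connected components of $\hat{\calB}$ are indexed by the number of semi-infinite bars. You have merely supplied the details the paper omits: that a finite-$\delta$ matching can neither delete a semi-infinite bar nor pair it with a finite one, so the count is locally constant along the path and hence constant by connectedness of $[0,1]$.
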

We denote $\sigma^\infty:\hat{\calB}\rightarrow\N$ the map which to a barcode $B\in\hat{\calB}$ associate its number of semi-infinite bars.

Since the works \cite{Bar94,UZ15,PolShel14}, it is standard to associate a barcode to a Floer complex and we will not recall it here. This is done thanks to the filtration by the action (\ref{formule action hamiltonienne}). The end points of the bars correspond to birth and death of classes in the filtered Floer complex $HF^{*,\kappa}(L,L';J_t,H)$, as the filtration $\kappa$ runs over $\R$. A precise description of this association in our context can be found in \cite{J21}.
We will denote $\hat{B}(L,L';J_t,H)$ the barcode in $\hat{\calB}$ associated to the Floer complex $CF(L,L';J_t,H)$. We have the following equality:
\begin{eqnarray}
hf(L,L';J_t,H)=\sigma^{\infty}(\hat{B}(L,L';J_t,H)).
\end{eqnarray}

From now on, we will only care about the number of semi-infinite bars in the barcodes, so we will simply denote the barcodes associated to the complex $CF(L,L';J_t,H)$ by $\hat{B}(L,L')$.

Let us now recall the following theorems (\cite{J21}) which will be a key argument in our proof.
\begin{thm}\label{prop contL barcode}\emph{(see \cite{J21}, Theorem $3.1$)}
Let $M$ be a Liouville domain. Let $L$ and $L'$ be two closed exact Lagrangian submanifolds, and assume that $H^1(L',\R)=0$. Then the map $\varphi \mapsto \hat{B}(\varphi(L'),L)$ from $\mathrm{Symp}(M,\omega)$ to $\hat{B}$ is locally $C^0$-Lipschitz. Thus it continuously extends to a map $\overline{\mathrm{Symp}}(M,\omega)\rightarrow\hat{\calB}$.
\end{thm}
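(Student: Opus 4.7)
My plan is to prove the theorem by reducing the question to the classical stability of Lagrangian Floer barcodes under Hamiltonian isotopies of Lagrangian submanifolds. Fix a reference symplectomorphism $\varphi_0\in\Symp(M,\omega)$ and consider a second symplectomorphism $\varphi_1$ with $d_{C^0}(\varphi_0,\varphi_1)\leq\epsilon$. Writing $L_i=\varphi_i(L')$ for $i=0,1$, I would first verify that $L_0$ and $L_1$ are themselves exact Lagrangian submanifolds: for any symplectomorphism $\varphi$, the one-form $\varphi^*\lambda-\lambda$ is closed on $M$, so its restriction to $L'$ is closed; the hypothesis $H^1(L',\R)=0$ forces it to be exact, which yields explicit primitives of $\lambda|_{L_i}$. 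This already shows the map of the theorem is well defined on all of $\Symp(M,\omega)$.

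The heart of the argument is to connect $L_0$ and $L_1$ by a Hamiltonian isotopy whose Hofer norm is bounded linearly by $\epsilon$. For this, I would invoke the Weinstein neighborhood theorem to identify a tubular neighborhood of $L_0$ with a neighborhood of the zero section in $T^*L_0\simeq T^*L'$. For $\epsilon$ small enough so that $L_1$ lies in this neighborhood, its image is a Lagrangian section, hence the graph of a closed one-form $\alpha$ on $L_0$. By the cohomological hypothesis, $\alpha=df$ for some smooth function $f\colon L_0\to\R$. Since the graph of $\alpha$ is $C^0$-$\epsilon$-close to the zero section, the pointwise norm of $\alpha$ is bounded by $C\epsilon$, and integrating along minimizing geodesics between pairs of points on $L_0$ yields an oscillation bound $\max f-\min f\leq C'\epsilon$, with $C'$ depending only on $L'$ and the chosen metric. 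Extending $f$ to a compactly supported Hamiltonian $H$ on $M$ via a cutoff function in the cotangent directions produces a Hamiltonian whose time-one flow $\psi_H$ sends $L_0$ to $L_1$ and whose Hofer norm remains bounded by $C''\epsilon$.

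At this point I would invoke the standard stability theorem for Lagrangian Floer barcodes: for the Lagrangian $L$ of the statement, the bottleneck distance between $\hat{B}(L_0,L)$ and $\hat{B}(\psi_H(L_0),L)=\hat{B}(L_1,L)$ is bounded by $\|H\|_{\mathrm{Hofer}}$, modulo the overall shift in $\hat{\calB}$ that precisely absorbs the indeterminacy in the choice of primitives of $\lambda$ on $L_0$ and $L_1$. Chaining these inequalities yields
$$d_{bottle}\bigl(\hat{B}(\varphi_0(L'),L),\hat{B}(\varphi_1(L'),L)\bigr)\leq C''\,d_{C^0}(\varphi_0,\varphi_1),$$
which is the sought local $C^0$-Lipschitz property. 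The continuous extension to $\overline{\Symp}(M,\omega)$ is then automatic from the completeness of $(\hat{\calB},d_{bottle})$ recalled earlier together with the density of $\Symp(M,\omega)$ in $\overline{\Symp}(M,\omega)$.

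I expect the main technical obstacle to be the passage from the pointwise bound on the one-form $\alpha$ to a uniform bound on the Hofer energy of the extending Hamiltonian $H$: one must arrange the cutoff in the cotangent directions so that $\max H-\min H$ remains linear in $\epsilon$ and uniform on a $C^0$-neighborhood of $\varphi_0$, which requires a careful construction with bump functions whose cotangent supports shrink with $\epsilon$. A secondary subtlety lies in handling the primitives and action filtrations coherently: the two primitives of $\lambda|_{L_0}$ and $\lambda|_{L_1}$ coming from Step 1 and from the Hamiltonian isotopy need not agree, but their difference is a global constant, which is exactly the reason why the target space for the barcode map must be the shifted space $\hat{\calB}$ rather than $\calB$.
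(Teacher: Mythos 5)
First, a caveat on the comparison: the paper does not prove this statement --- it imports it verbatim from \cite{J21}, Theorem 3.1 --- so there is no in-text proof to measure your attempt against. Judged on its own terms, your overall architecture (reduce to a Hamiltonian perturbation of one of the two Lagrangians, invoke bottleneck-versus-Hofer stability of Floer barcodes, then extend by density of $\Symp(M,\omega)$ and completeness of $(\hat{\calB},d_{bottle})$) is the expected skeleton, and both your first step (exactness of $\varphi(L')$ from closedness of $\varphi^*\lambda-\lambda$ together with $H^1(L',\R)=0$) and your last step are correct.

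The genuine gap is the sentence ``For $\epsilon$ small enough so that $L_1$ lies in this neighborhood, its image is a Lagrangian section, hence the graph of a closed one-form $\alpha$ on $L_0$.'' This is false. $C^0$-closeness of $\varphi_1$ to $\varphi_0$ gives no control whatsoever on $d\varphi_1$, so $\theta=\varphi_1\varphi_0^{-1}$ is only $C^0$-close to the identity, and $L_1=\theta(L_0)$ can be confined to an arbitrarily thin Weinstein neighbourhood of $L_0$ while being wildly non-graphical: a Hamiltonian supported in a tiny Darboux ball meeting $L_0$ displaces points by less than any prescribed $\epsilon$ yet can push an arc of $L_0$ into a ``finger'' whose projection to $L_0$ is not injective. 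Everything downstream collapses with this claim: without $\alpha$ there is no primitive $f$, no pointwise bound $|\alpha|\leq C\epsilon$, and no oscillation estimate. Note also that the natural fallback --- bounding the oscillation of the primitive $f_{L_1}$ of $\lambda|_{L_1}$ directly, using that $\lambda$ has pointwise norm at most $\epsilon$ on the thin neighbourhood --- fails for the same reason, since one must integrate along paths inside $L_1$, whose intrinsic length is not controlled by $C^0$ data. Relating the Hausdorff distance between two exact Lagrangians (equivalently, the $C^0$-distance between the symplectomorphisms producing them) to a Hofer or spectral distance is precisely the hard quantitative content of the theorem; it cannot be obtained by the Weinstein-graph shortcut and requires a genuinely $C^0$ argument (e.g.\ fragmentation into small supports combined with energy--capacity inequalities, or a direct control of the action filtration), which is what \cite{J21} supplies and your sketch omits.
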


From \autoref{prop contL barcode}, we immediately obtained the useful following theorem. 
\begin{thm}\label{thm barcodes connected}\emph{(see \cite{J21}, Theorem 3.3)}
Let $M$ be a Liouville domain. Let $L$ and $L'$ be two exact compact Lagrangian submanifolds, and assume that $H^1(L',\R)=0$. Consider two symplectomorphisms $\varphi$ and $\psi$.
\begin{itemize}
    \item If these two symplectomorphisms are in the same connected component of $\overline{\Symp}(M,\omega)$, then the two barcodes $\hat{B}(\varphi(L'),L)$ and $\hat{B}(\psi(L'),L)$ are in the same connected component of $\hat{\calB}$.
    \item If these two symplectomorphisms are isotopic in $\overline{\Symp}(M,\omega)$, then there is a continuous path of barcodes from $\hat{B}(\varphi(L'),L)$ to $\hat{B}(\psi(L'),L)$.
\end{itemize}
\end{thm}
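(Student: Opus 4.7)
The plan is to combine the author's barcode continuity (\autoref{prop contL barcode} and \autoref{thm barcodes connected}) with Keating's Floer-theoretic bookkeeping (\autoref{Keating iter} and \autoref{Keating init}) in order to lift her smooth rigidity into the $C^0$-setting. Fix a non-trivial reduced word $w = \tau_{L_{i_r}}^{k_r}\cdots\tau_{L_{i_1}}^{k_1}$ in $\tau_L,\tau_{L'}$ (so $i_j\in\{L,L'\}$, $i_j\neq i_{j+1}$, $k_j\in\Z^*$); the goal is to show $[w]\neq[\Id]$ in $\pi_0(\overline{\Symp}(M,\omega))$. Since $n\geq 2$, both spheres satisfy $H^1(\,\cdot\,;\R)=0$, so the barcode continuity results apply with either as the reference Lagrangian.

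Assume for contradiction that $[w]=[\Id]$ in $\pi_0(\overline{\Symp})$. For every ordered pair $(\Lambda,\Lambda')\in\{L,L'\}^2$, \autoref{thm barcodes connected} places $\hat B(w(\Lambda),\Lambda')$ and $\hat B(\Lambda,\Lambda')$ in the same connected component of $\hat\calB$; by \autoref{prop path inf bar} they carry the same number of semi-infinite bars. Since $w$ is a genuine symplectomorphism, the identification $\sigma^\infty = hf$ recalled in the preliminaries then yields
$$hf(w(\Lambda),\Lambda')=hf(\Lambda,\Lambda')\quad\text{for all }\Lambda,\Lambda'\in\{L,L'\}. \qquad(\star)$$

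Next, I apply Keating's bookkeeping to produce a strict inequality that $(\star)$ cannot accommodate. Choose the starting sphere $\Lambda_0\in\{L,L'\}$ according to parity: if $r$ is odd, let $\Lambda_0$ be the sphere opposite the inner-most twist $\tau_{L_{i_1}}$; if $r$ is even, let $\Lambda_0 := L_{i_1}$, so that $\tau_{L_{i_1}}^{k_1}(\Lambda_0)=\Lambda_0$ as a set and $w(\Lambda_0)$ coincides with the image of $\Lambda_0$ under the reduced subword $\tau_{L_{i_r}}^{k_r}\cdots\tau_{L_{i_2}}^{k_2}$, which has odd length $r-1$ and inner-most twist opposite to $\Lambda_0$. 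In both cases we are reduced to an odd-length reduced word applied to the sphere opposite its inner-most twist. The hypothesis that $L,L'$ are not quasi-isomorphic in $Fuk(M)$ (automatic when $hf(L,L')\geq 3$, assumed when $hf(L,L')=2$) ensures that \autoref{Keating iter} applies at every iteration. \autoref{Keating init} supplies the base case
$$hf(L_{i_1},\tau_{L_{i_1}}^{k_1}\Lambda_0)<hf(\Lambda_0,\tau_{L_{i_1}}^{k_1}\Lambda_0),$$
and invoking \autoref{Keating iter} once per remaining twist flips the direction of this strict inequality each time; after an odd total number of twists the original direction is restored, giving
$$hf(L_{i_1},w(\Lambda_0))<hf(\Lambda_0,w(\Lambda_0)).$$

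Substituting into $(\star)$, the right-hand side becomes $hf(\Lambda_0,\Lambda_0)=\dim H^*(S^n;\Z/2)=2$, while the left-hand side becomes $hf(L_{i_1},\Lambda_0)=hf(L,L')$ since $L_{i_1}$ and $\Lambda_0$ are the two distinct spheres. Hence $hf(L,L')<2$, contradicting the hypothesis $hf(L,L')\geq 2$. As the reduced word $w$ was arbitrary, $\tau_L$ and $\tau_{L'}$ generate a free subgroup of $\pi_0(\overline{\Symp}(M,\omega))$. The main obstacle is the parity-based choice of $\Lambda_0$: for even $r$ the observation $\tau_L(L)=L$ is essential to reduce to an odd-length word so that \autoref{Keating init} can serve as the base case; once this reduction is arranged, Keating's two lemmas mesh cleanly with the barcode obstruction $(\star)$.
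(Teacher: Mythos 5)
Your proposal does not prove the statement it was assigned. The statement in question is \autoref{thm barcodes connected} itself: that $\varphi$ and $\psi$ lying in the same connected component (resp.\ being isotopic) in $\overline{\Symp}(M,\omega)$ forces the barcodes $\hat{B}(\varphi(L'),L)$ and $\hat{B}(\psi(L'),L)$ to lie in the same connected component of $\hat{\calB}$ (resp.\ to be joined by a continuous path). What you have written instead is an outline of the proof of the main result, \autoref{thm free subgroup}, and --- crucially --- your very first step invokes \autoref{thm barcodes connected} as a known ingredient. As an argument for the assigned statement this is circular: you cannot establish a theorem by citing it.

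The intended proof is essentially one line, and the paper says as much (``From \autoref{prop contL barcode}, we immediately obtained the useful following theorem''): by \autoref{prop contL barcode} the assignment $\varphi\mapsto\hat{B}(\varphi(L'),L)$ is locally $C^0$-Lipschitz on $\Symp(M,\omega)$ and therefore extends to a continuous map $\overline{\Symp}(M,\omega)\rightarrow\hat{\calB}$; a continuous map sends a connected component into a single connected component of the target, which gives the first bullet, and composing it with a continuous isotopy $[0,1]\rightarrow\overline{\Symp}(M,\omega)$ from $\varphi$ to $\psi$ produces the continuous path of barcodes required by the second bullet. (The hypothesis $H^1(L';\R)=0$ is there only so that \autoref{prop contL barcode} applies.) None of the Floer-theoretic bookkeeping from Keating's lemmas, the parity analysis of reduced words, or the identity $\sigma^\infty=hf$ belongs in this proof; that material is the content of Section 2, where \autoref{thm barcodes connected} is \emph{used}, not proved.
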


\section{Proof of \autoref{thm free subgroup}}\label{sec free subgroup}
This proof almost exactly follows Keating's, in a slightly different context. We need here to work with barcodes as Floer cohomology is not well defined for $C^0$ symplectic topology.

Suppose $n\geq2$. Let $(M^{2n},\omega)$ be an exact symplectic manifold with contact type boundary. Let $L$ and $L'$ be two Lagrangian spheres such that $dim(HF(L,L'))\geq 2$. Additionally, if there is equality, we ask for $L$ and $L'$ not to be quasi-isomorphic in the Fukaya category of $M$. We denote $\tau_L$ and $\tau_{L'}$ the two Dehn twists around $L$ and $L'$.

Let us first deal with the case $dim(HF(L,L'))> 2$.
\begin{lemma}\label{lem conn comp DT}
For all $k\in\Z^*$, the maps $\tau_L^k$ and $\tau_{L'}^k$ are not in the component of $Id$ in $\pi_0(\overline{Symp}(M,\omega))$.
\end{lemma}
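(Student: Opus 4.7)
The plan is to argue by contradiction. Suppose that, for some $k\in\Z^*$, the iterated Dehn twist $\tau_{L'}^k$ lies in the connected component of the identity in $\overline{\Symp}(M,\omega)$; I will derive a contradiction with \autoref{Keating init}. The whole idea is that \autoref{thm barcodes connected} translates an isotopy in $\overline{\Symp}$ into an equality of semi-infinite bar counts, and the smooth numerical inequalities of Keating will preclude these equalities. The statement for $\tau_L^k$ will then follow by exchanging the roles of $L$ and $L'$.

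Concretely, since $L$ and $L'$ are spheres of dimension $n\geq 2$, we have $H^1(L,\R)=H^1(L',\R)=0$, so \autoref{thm barcodes connected} applies to the pair $(\tau_{L'}^k,\Id)$ acting on $L$, with any ambient test Lagrangian $\Lambda\in\{L,L'\}$. For each such $\Lambda$, the barcodes $\hat{B}(\tau_{L'}^k(L),\Lambda)$ and $\hat{B}(L,\Lambda)$ lie in the same connected component of $\hat{\calB}$, so \autoref{prop path inf bar} yields $hf(\tau_{L'}^k(L),\Lambda)=hf(L,\Lambda)$. I now evaluate this identity in two cases. With $\Lambda=L$, using the standard identification $HF(L,L)\cong H^*(L)$ and the fact that $L\cong S^n$ with $n\geq 2$, I obtain $hf(\tau_{L'}^k(L),L)=hf(L,L)=2$. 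With $\Lambda=L'$, I obtain $hf(\tau_{L'}^k(L),L')=hf(L,L')>2$. Invoking the symmetry $hf(A,B)=hf(B,A)$ (Poincaré duality for exact closed Lagrangian Floer cohomology), these rewrite as $hf(L,\tau_{L'}^k L)=2$ and $hf(L',\tau_{L'}^k L)>2$, which directly contradicts the Keating inequality $hf(L',\tau_{L'}^k L)<hf(L,\tau_{L'}^k L)$ furnished by \autoref{Keating init}.

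The same argument, with $L$ and $L'$ swapped throughout and \autoref{Keating init} applied in the form $hf(L,\tau_L^k L')<hf(L',\tau_L^k L')$ together with the two tests $\Lambda\in\{L',L\}$, disposes of $\tau_L^k$. I do not anticipate any real obstacle here: the whole point of the $C^0$ barcode continuity of \cite{J21} is precisely to promote Keating's rank inequalities from $\Symp$ into $\overline{\Symp}$, and under the generous hypothesis $hf(L,L')>2$ a single test against the ambient Lagrangian $L$ already suffices to separate the two sides of Keating's inequality. The subtler borderline case $hf(L,L')=2$, in which $hf(L,L)$ and $hf(L,L')$ coincide and the above separation collapses, is not covered by this lemma and will presumably require iterating \autoref{Keating iter} to produce a more refined test Lagrangian; this is what I expect to be the harder step in the remainder of the proof of \autoref{thm free subgroup}.
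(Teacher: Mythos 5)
Your proof is correct and takes essentially the same route as the paper: assume some iterate of a twist is $C^0$-isotopic to the identity, use \autoref{thm barcodes connected} together with \autoref{prop path inf bar} to force the twisted ranks $hf(\Lambda,\tau^k(L))$ to coincide with the untwisted ones, and contradict one of Keating's strict inequalities; the only (immaterial) difference is that you quote \autoref{Keating init} where the paper quotes \autoref{Keating iter} with $L_0=L_1=L'$. One small correction to your closing remark: your argument does not collapse when $hf(L,L')=2$, since \autoref{Keating init} still yields a strict inequality between two quantities that barcode continuity forces to both equal $2$ --- and this is exactly how the paper later disposes of the borderline case.
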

\begin{proof}
This is a direct consequence of \autoref{Keating iter} together with \autoref{thm barcodes connected}. Assume that there exists $n_0\in\Z^*$ such that $\tau_L^{n_0}$ is isotopic to the identity in $\overline{\Symp}(M,\omega)$. Then there exist continuous paths $(B_t)_{t\in[0;1]}$ and $(B'_t)_{t\in[0;1]}$ of barcodes such that 
$$B'_0=B(L,L')\text{ and } B'_1=B(L,\tau_L^{n_0}L'),$$
$$B_0=B(L',L')\text{ and } B_1=B(L',\tau_L^{n_0}L').$$
Since $hf(L,L')> 2$, we have $\sigma^{\infty}(B'_0)>\sigma^{\infty}(B_0)$. For $L$ corresponding to $L$ and $L'=L_0=L_1$, \autoref{Keating iter} tells us that $\sigma^{\infty}(B'_1)<\sigma^{\infty}(B_1)$. But, according to \autoref{prop path inf bar}, the number of semi-infinite bars does not change along a continuous path of barcodes. So we get a contradiction and there exists no such $n_0$. The exact same argument applies for $\tau_{L'}$.
\end{proof}
So if the subgroup generated by $\tau_L$ and $\tau_{L'}$ is not free, then there exists $k\in\N$ and $(a_i,b_i)_{i\in\llbracket1,k\rrbracket}\in(\Z^*\times\Z^*)^k$ such that
$$\psi:=\tau_{L'}^{b_k}\tau_{L}^{a_k}\dots\tau_{L'}^{b_1}\tau_{L}^{a_1}=1\in\pi_0(\overline{Symp}(M,\omega)).$$
By the second point of \autoref{thm barcodes connected}, we can define  two paths of barcodes $(B'_t)_{t\in[0,1]}$ and $(B_t)_{t\in[0,1]}$ such that
$$B'_0=B(L',L)\text{ and } B'_1=B(L',\psi(L)),$$
$$B_0=B(L,L)\text{ and } B_1=B(L,\psi(L)).$$
These two paths are continuous paths of barcodes, so they keep the same number of semi-infinite bars at all times, i.e.
\begin{equation}\label{egalité nb barres}
    \sigma^{\infty}(B'_0)=\sigma^{\infty}(B'_1)\text{ and }\sigma^{\infty}(B_0)=\sigma^{\infty}(B_1).
\end{equation}

By iterating \autoref{Keating iter} to the initial inequality $hf(L',\tau_L^{a_1}l)>hf(L,\tau_L^{a_1}l)$, Keating gets that 
$$hf(L',\tau_{L'}^{b_k}\tau_{L}^{a_k}\dots\tau_{L'}^{b_1}\tau_{L}^{a_1}L)<hf(L,\tau_{L'}^{b_k}\tau_{L}^{a_k}\dots\tau_{L'}^{b_1}\tau_{L}^{a_1}L).$$
In terms of barcodes, this now translates as
$$\sigma^{\infty}(B(L',\psi(L)))<\sigma^{\infty}(B(L,\psi(L))),$$
$$\text{i.e., }\sigma^{\infty}(B'_1)<\sigma^{\infty}(B_1).$$
Then, by equality \ref{egalité nb barres}, we get that $\sigma^{\infty}(B'_0)<\sigma^{\infty}(B_0)$.
Nevertheless, since $L$ is a Lagrangian sphere, we have $\sigma^{\infty}(B_0)=2$ and thus $$\sigma^{\infty}(B'_0)>\sigma^{\infty}(B_0),$$
which leads us to a contradiction. So, $\tau_L$ and $\tau_{L'}$ generate a free subgroup.

We now turn to the case $hf(L,L')=2$. With a similar argument as for \autoref{lem conn comp DT} applied to the inequality of \autoref{Keating init}, we also get in this case that for all $k\in\Z^*$, the maps $\tau_L^k$ and $\tau_{L'}^k$ are not in the component of $Id$ in $\pi_0(\overline{Symp}(M,\omega))$. So, if the subgroup generated by $\tau_L$ and $\tau_{L'}$ is not free, then there exists $k\in\N$ and $(a_i,b_i)_{i\in\llbracket1,k\rrbracket}\in(\Z^*\times\Z^*)^k$ such that
$$\psi:=\tau_{L'}^{b_k}\tau_{L}^{a_k}\dots\tau_{L'}^{b_1}\tau_{L}^{a_1}=1\in\pi_0(\overline{Symp}(M,\omega))
.$$
With the same notations as before, we define two paths of barcodes $(B'_t)_{t\in[0,1]}$ and $(B_t)_{t\in[0,1]}$ such that
$$B'_0=B(L',L)\text{ and } B'_1=B(L',\psi(L)),$$
$$B_0=B(L,L)\text{ and } B_1=B(L,\psi(L)).$$

Now, \autoref{Keating init} also tells us that for all $a_1\in\Z$ and $b_1\in\Z^*$, $hf(L',\tau_{L'}^{b_1}\tau_L^{a_1}L)<hf(L',\tau_{L}^{b_1}\tau_L^{a_1}L)$. 
As before, using \autoref{Keating iter}, Keating gets that
$$hf(L',\tau_{L'}^{b_k}\tau_{L}^{a_k}\dots\tau_{L'}^{b_1}\tau_{L}^{a_1}L)<hf(L,\tau_{L'}^{b_k}\tau_{L}^{a_k}\dots\tau_{L'}^{b_1}\tau_{L}^{a_1}L),$$
$$\text{i.e., }\sigma^{\infty}(B(L',\psi(L)))<\sigma^{\infty}(B(L,\psi(L))),$$
$$\text{i.e., }\sigma^{\infty}(B'_1)<\sigma^{\infty}(B_1).$$
So, by equality \ref{egalité nb barres}, we get
$$\sigma^{\infty}(B'_0)<\sigma^{\infty}(B_0),$$
which is a contradiction since we actually have
$$\sigma^{\infty}(B'_0)=hf(L,L')=2=hf(L,L)=\sigma^{\infty}(B_0).$$
 So $\tau_L$ and $\tau_{L'}$ generate a free subgroup.

\bibliographystyle{siam}
\bibliography{Morceaux/Bibli}

\end{document}